\numberwithin{equation}{section}
\newif\ifproofread
\DeclarePairedDelimiter\abs{\lvert}{\rvert}
\DeclarePairedDelimiter\norm{\lVert}{\rVert}
\newcommand*\bigcdot{\mathpalette\bigcdot@{.5}}
\newcommand*\bigcdot@[2]{\mathbin{\vcenter{\hbox{\scalebox{#2}{$\m@th#1\bullet$}}}}}
\newtheorem*{Lemma*}{Lemma}
\newtheorem*{Corollary*}{Corollary}
\newtheorem*{theorem*}{Theorem}
\newtheorem*{definition*}{Definition}
\newtheorem{theorem}{Theorem}[section]
\newtheorem{definition}[theorem]{Definition}
\newtheorem{remark}[theorem]{Remark}
\newtheorem{lemma}[theorem]{lemma}
\newtheorem{corollary}[theorem]{corollary}
\newcommand{\thistheoremname}{}
\newtheorem*{genericthm}{\thistheoremname}
\newcommand{\NN}{\mathbb{N}}
\newcommand{\QQ}{\mathbb{Q}}
\newcommand{\RR}{\mathbb{R}}
\newcommand{\ZZ}{\mathbb{Z}}
\newcommand{\eps}{\epsilon}
\newcommand{\pid}{\mathrel{\ooalign{$\lneq$\cr\raise.22ex\hbox{$\lhd$}\cr}}}
\title{A Note on Weyl's Equidistribution Theorem}
\author{Yuval Yifrach\footnote{Technion - Israel Institute of Technology} \footnote{Address all correspondences to yuval@campus.technion.ac.il}}
\begin{document}
\maketitle
\begin{abstract}
H. Weyl proved in \cite{Weyl} that integer evaluations of polynomials are equidistributed mod 1 whenever at least one of the non-free coefficients (namely a coefficient of a monomial of degree at least $1$) is irrational. We use Weyl's result to prove a higher dimensional analogue of this fact. Namely, we prove that evaluations of polynomials on lattice points are equidistributed mod 1 whenever at least one of the non-free coefficients is irrational. This result improves the main result of Arhipov-Karacuba-\v{C}ubarikov in \cite{PolWeyl}. We prove this analogue as a corollary of a theorem that guarantees equidistribution of lattice evaluations mod 1 for all functions which satisfy some restraints on their derivatives. Another corollary we prove is that for $p\in(1,\infty)$ the $\ell^p$ norms of integer vectors are equidistributed mod 1.
\end{abstract}
\maketitle

\section{Introduction}
The following definition lies in the heart of all the results in this paper.
\begin{definition}\label{ED}
	Let $n\in \NN$, let $\Gamma\subset \RR^n$ a discrete set and let $F:D\subset \mathbb R^n\rightarrow \mathbb R$. We say that $F(\Gamma)\bmod1$ is equidistributed if the sequence of discrete counting measures $\sigma^D_R$ of $\Gamma$-points in $B_R(0)\cap D$ satisfies:
	\begin{equation}
		\lim_{R\rightarrow \infty}(F\bmod1)_*\sigma^D_R=\lambda
	\end{equation} 
in the weak sense, where $B_R(0)$ denotes the Euclidean ball of radius $R$ around the origin in $\mathbb R^n$, $F\bmod1:D\rightarrow\mathbb R/\mathbb Z$ is the composition of $F$ with the quotient map, $(F\bmod1)_*\sigma^D_R$ denotes the push-forward measure, and $\lambda$ is the Haar probability measure on $S^1$, the unit circle.
\end{definition}
H. Weyl proves in \cite{Weyl} that for any one variable polynomial $f$ with at least one non-free irrational coefficient (namely a coefficient of a monomial of degree at least $1$), $f(\ZZ)\bmod 1$ is equidistributed. This result has various proofs (e.g. Weyl's original proof in \cite{Weyl} and Furstenberg's dynamical proof in \cite[p. 116]{MyFavBook}) and generalisations (e.g. \cite{Bosh} for when instead of polynomials one considers a richer family of functions), including some results for joint distribution of several functions (see \cite{Bosh}). The main result of \cite{PolWeyl} shows that for any multivariable polynomial $F:\RR^n\rightarrow \RR$ with coefficients that satisfy certain Diophantine approximation conditions, $F(\ZZ^n)\bmod 1$ is equidistributed. A more natural (and in fact 'word to word') extension of Weyl's result is the following theorem:
\begin{theorem}\label{probOp}
	Let $n\in \NN$ and let $F:\RR^n\rightarrow \RR$, be a polynomial given by $F(x)=\sum_{\ell}\alpha_{\ell} x^{\ell}$ where $\ell=(\ell_1,\dots,\ell_n)$ is a multi-index. Assume that there exist $\ell$ such that $\abs{\ell}>0$ and $\alpha_{\ell}\notin\QQ$, then $F(\ZZ^n)\bmod1$ is equidistributed. 
\end{theorem}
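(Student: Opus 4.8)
The plan is to reduce to exponential sums and then induct on the degree, using van der Corput differencing to drive the degree down to the linear case, where the ball geometry can be handled by elementary geometric-sum bounds. First, since each pushforward $(F\operatorname{mod}1)_*\sigma^{\RR^n}_R$ is a probability measure on the compact circle, Weyl's criterion reduces the statement to showing that for every nonzero $h\in\ZZ$,
\[
\frac{1}{N(R)}\sum_{m\in B_R(0)\cap\ZZ^n}e^{2\pi i h F(m)}\xrightarrow[R\to\infty]{}0,\qquad N(R):=\#\bigl(B_R(0)\cap\ZZ^n\bigr).
\]
Replacing $F$ by $hF$, whose coefficient on $x^{\ell^*}$ is $h\alpha_{\ell^*}\notin\QQ$ and still of positive degree, it suffices to treat $h=1$ for an arbitrary $F$ satisfying the hypothesis. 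I would prove this by induction on $d=\deg F$.

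The base case $d=1$ is where the geometry does the work. Writing $F(m)=\sum_i\alpha_i m_i+c$ with some $\alpha_{j}\notin\QQ$ and slicing the ball into fibers in the $x_{j}$-direction, each inner sum is a geometric progression and is therefore bounded in modulus by the constant $\tfrac{1}{\abs{\sin\pi\alpha_{j}}}$, \emph{uniformly} in the remaining coordinates and in $R$; summing over the $O(R^{n-1})$ nonempty fibers gives $o(N(R))$. \textbf{This uniformity is exactly what fails in higher degree:} applying the one-variable theorem of Weyl fiber-by-fiber yields, for almost every fixed fiber, only that the normalized inner sum tends to $0$, at a rate governed by the Diophantine quality of the varying leading coefficient along the fiber, which cannot be controlled uniformly as both the fiber and its length move with $R$. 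Overcoming this non-uniformity is the main obstacle, and van der Corput's inequality is the device that circumvents it, estimating the full ball-sum directly while lowering the degree.

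For the inductive step, suppose $d\ge2$ and the claim holds below $d$. Let $d^*\ge1$ be the largest total degree carried by an irrational coefficient, fix $\ell^*$ with $\abs{\ell^*}=d^*$ and $\alpha_{\ell^*}\notin\QQ$, and pick $j$ with $\ell^*_j\ge1$. If $d^*\ge2$, I would apply the one-dimensional van der Corput inequality in the $x_j$-direction on each fiber and average over the remaining coordinates by Cauchy--Schwarz, obtaining for every $H$
\[
\Bigl|\tfrac{1}{N(R)}\sum_{m\in B_R(0)\cap\ZZ^n}e^{2\pi i F(m)}\Bigr|^2\lesssim \frac1H+\frac1H\sum_{1\le h<H}\Bigl|\tfrac{1}{N(R)}\sum_{m}e^{2\pi i \Delta_{h}F(m)}\Bigr|+o(1),
\]
where $\Delta_h F(x):=F(x+h e_j)-F(x)$. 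The key algebraic point is that $\Delta_hF$ has degree $\le d-1$ yet retains an irrational coefficient of positive degree: a direct expansion shows that the coefficient of $x^{\ell^*-e_j}$ equals $\ell^*_j\,h\,\alpha_{\ell^*}+(\text{rational})$, since every other multi-index contributing to that monomial has total degree $>d^*$ and hence rational coefficient by maximality of $d^*$. Thus for each fixed $h\ne0$ the inner normalized sum tends to $0$ by the inductive hypothesis, so $\limsup_R|\cdots|^2\lesssim 1/H$, and letting $H\to\infty$ closes this case.

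It remains to treat $d^*=1$ with $d\ge2$, where differencing would push the irrationality onto the constant term. Here all monomials of degree $\ge2$ have rational coefficients, so $e^{2\pi i(F(m)-L(m))}$ is a periodic function of $m\in\ZZ^n$, with $L$ the degree-$\le1$ part of $F$; decomposing $\ZZ^n$ into the finitely many residue classes on which this periodic factor is constant reduces the sum, class by class, to linear exponential sums over cosets of a sublattice, which are handled exactly as in the base case. Finally, I would record that all van der Corput manipulations over the Euclidean ball incur only boundary errors of size $O(H\,R^{n-1})=o(N(R))$ for each fixed $H$ (from the lattice points within $O(H)$ of the sphere $\abs{m}=R$, and from the range mismatch when the summation index is shifted by $h$); verifying these error bounds is the routine but slightly tedious part I would defer to the explicit computations.
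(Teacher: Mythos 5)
Your proposal is correct, but it follows a genuinely different route from the paper. The paper deduces Theorem \ref{probOp} from the general criterion of Theorem \ref{WeylGen}: Lemma \ref{a} produces an integer direction $v$ in which $\frac{\partial^d F}{\partial v^d}\equiv\alpha_0$ is an irrational constant (whenever some top-degree coefficient is irrational), so that $F$ satisfies (\ref{homCond})--(\ref{pushCondHom}) with $\mu=\delta_{\alpha_0}$; the proof of Theorem \ref{WeylGen} then slices $[0,T]S$ into segments parallel to $v$, approximates the function on each segment by a one-variable polynomial via Taylor's theorem, and imports the one-dimensional Weyl theorem uniformly over segments through discrepancy and the Erd\H{o}s--Tur\'an inequality (the sets $A^d(N,\eps)$). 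The residue-class induction you use when $d^*=1$ appears in the paper too, to handle the case where every degree-$d$ coefficient is rational. What you do instead is a direct exponential-sum argument: Weyl's criterion, the $n$-dimensional van der Corput inequality in the $x_j$-direction to lower the degree (your computation that the coefficient of $x^{\ell^*-e_j}$ in $\Delta_hF$ equals $\ell^*_j h\alpha_{\ell^*}+(\text{rational})$ is exactly right, and maximality of $d^*$ is what makes it work), and a geometric-series bound in the linear base case. Your route is more elementary and self-contained --- it re-proves Weyl's theorem rather than citing it, and needs neither the irrational-measure formalism nor Erd\H{o}s--Tur\'an --- but it is intrinsically polynomial: differencing lowers degree only for polynomials, so it cannot yield Theorem \ref{WeylPar}, Corollary \ref{normED}, or Theorem \ref{WeylGen}, which is the paper's main objective (and what \cite{stat} uses). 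One caution: the displayed van der Corput bound, with the absolute value outside the full sum over $m$, is the version your induction needs (a literal fiber-by-fiber application leaves absolute values inside the average over fibers, which the inductive hypothesis does not control); it is obtained by the global shift-by-$he_j$ Cauchy--Schwarz argument over all of $\ZZ^n$ at once, at the cost of exactly the $O(HR^{n-1})$ boundary and domain-mismatch errors you flag at the end.
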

The method we use to prove Theorem \ref{probOp} is general and is applicable for the proof of the following theorem as well:
\begin{theorem}\label{WeylPar}
	Let $n\in \NN$ and let $M\in \operatorname{GL}_n(\RR)$. Let $S\subset S^{n-1}$ be spherical cap inside the unit Euclidean sphere in $\RR^n$ (namely, a subset of the sphere bounded by the intersection of a hyperplane with the sphere) and let $f:\RR_+\cdot S\rightarrow \RR$ denotes a smooth homogeneous (namely $f(sx)=sf(x)$ for all $x\in \RR_+\cdot S$ and $s\in \RR_+$) function where for a subset $I\subset \RR_+$, $I\cdot S:=\{ta:t\in I,a\in S\}$. Assume there exists $v\in M\ZZ^n$ such that:
	\begin{equation*}
		\left(\frac{\partial f}{\partial v}\bmod1\right)_*m_S\ll \lambda
	\end{equation*}
where $\lambda$ is the Haar probability measure on $S^1$ and $m_S$ is the restriction of the $S^{n-1}$ area measure to $S$. Then for every $u\in \RR^n$, $f(M\ZZ^n+u)\bmod1$ is equidistributed.
	\end{theorem}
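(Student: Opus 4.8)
The plan is to verify equidistribution through Weyl's criterion. Write $e(t):=e^{2\pi i t}$, set $D:=\RR_+\cdot S$ and $N(R):=\#\bigl(\ZZ^n\cap B_R(0)\cap D\bigr)$, and let $W_m(R):=\tfrac{1}{N(R)}\sum_{x\in\ZZ^n\cap B_R(0)\cap D}e(mf(x))$ be the $m$-th Fourier coefficient of $(f\operatorname{mod}1)_*\sigma^D_R$. Since a sequence of probability measures on $S^1$ converges weakly to $\lambda$ if and only if each nonzero Fourier coefficient tends to $0$, it suffices to prove $W_m(R)\to0$ as $R\to\infty$ for every fixed integer $m\neq0$. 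The hypothesis enters as follows: because $f$ is homogeneous of degree $1$, each $\partial f/\partial x_i$ is homogeneous of degree $0$, hence so is $\partial f/\partial v$; thus there is $\psi\colon S\to\RR$ with $\frac{\partial f}{\partial v}(x)=\psi(x/\abs{x})$, and the assumption reads $(\psi\operatorname{mod}1)_*m_S\ll\lambda$.

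The engine is Weyl differencing along the fixed integer direction $v$ (a van der Corput inequality), whose role is to annihilate the quadratic and higher behaviour of $f$ so that only the first directional derivative survives in the phase. Setting $u(x)=e(mf(x))\mathbbm 1[x\in B_R(0)\cap D]$ and using the identity $H\,N(R)\,W_m(R)=\sum_x\sum_{j=0}^{H-1}u(x+jv)$, an application of Cauchy--Schwarz over the (slightly enlarged) support gives, for every $H\ge1$,
\[
\abs{W_m(R)}^2\le\frac{N(R)+O(HR^{n-1})}{N(R)^2\,H}\sum_{\abs{h}<H}\Bigl(1-\tfrac{\abs h}{H}\Bigr)C_h(R),\qquad C_h(R):=\sum_{x\,:\,x,\,x+hv\in B_R(0)\cap D}e\bigl(m(f(x+hv)-f(x))\bigr),
\]
where the displayed $h$-sum is real and nonnegative.

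Next I would evaluate each correlation $C_h(R)$. By the fundamental theorem of calculus and degree-$0$ homogeneity, $f(x+hv)-f(x)=\int_0^h\psi\bigl((x+tv)/\abs{x+tv}\bigr)\,dt=h\,\psi(x/\abs x)+O(h^2/\abs x)$, the error being governed by $\norm{\nabla\psi}_\infty$. Replacing the phase by $h\,\psi(x/\abs x)$ costs at most $O\bigl(mh^2\sum_{x\in B_R(0)\cap D}\abs x^{-1}\bigr)=O(mH^2R^{n-1})=o(N(R))$, while the discrepancy between the constraints on $x$ and on $x+hv$ affects only a boundary layer of $O(HR^{n-1})$ points; hence $C_h(R)=\sum_{x\in B_R(0)\cap D}e\bigl(mh\,\psi(x/\abs x)\bigr)+o(N(R))$. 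I then invoke equidistribution of lattice directions in the cone, namely $\tfrac1{N(R)}\sum_{x\in\ZZ^n\cap B_R(0)\cap D}\Phi(x/\abs x)\to\tfrac1{m_S(S)}\int_S\Phi\,dm_S$ for continuous $\Phi$ on $\overline S$, applied to $\Phi=e(mh\,\psi(\cdot))$. This yields $C_h(R)/N(R)\to c_h:=\tfrac1{m_S(S)}\int_S e(mh\,\psi)\,dm_S$, which is, up to normalization, a Fourier coefficient of $(\psi\operatorname{mod}1)_*m_S$. Since this measure is absolutely continuous with respect to $\lambda$, the Riemann--Lebesgue lemma gives $c_h\to0$ as $\abs h\to\infty$ for fixed $m\neq0$, while $\abs{c_h}\le1$ always.

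Finally I would take iterated limits: for fixed $m$ and $H$, letting $R\to\infty$ in the van der Corput bound (the finitely many ratios $C_h(R)/N(R)$ converge to $c_h$ and the prefactor tends to $1/H$) yields $\limsup_R\abs{W_m(R)}^2\le\tfrac1H\sum_{\abs h<H}\abs{c_h}$; letting $H\to\infty$, the Cesàro averages of the bounded null sequence $(c_h)$ tend to $0$, so $W_m(R)\to0$, as Weyl's criterion requires. I expect the main obstacle to be the uniform control of the directional error in the third step: one must ensure the segment $x+tv$ lies in $D$ (so that $f$ and $\psi$ are defined along it), which can fail only in a neighbourhood of $\partial D$ of angular thickness $O(H/\abs x)$ and near the apex, together containing only $O(HR^{n-1})=o(N(R))$ lattice points, and one must bound $\norm{\nabla\psi}_\infty$, for which the natural regularity reading—$f$, hence $\psi$, being $C^1$ up to $\partial S$—suffices. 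The entire conclusion rests on this differencing collapsing the problem onto the single measure $\bigl(\tfrac{\partial f}{\partial v}\operatorname{mod}1\bigr)_*m_S$, precisely the object the hypothesis controls.
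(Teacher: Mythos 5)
Your proof is correct, and it takes a genuinely different route from the paper's. The paper obtains Theorem \ref{WeylPar} as the special case $d=1$, $\mu\ll\lambda$ of Theorem \ref{WeylGen}, whose proof is a covering-plus-discrepancy argument: most of the cone is tiled by segments $I_p=\{p+kv:k\leq N_p\}$, along each segment $f$ is Taylor-approximated by a polynomial whose leading coefficient is $\frac{\partial^d f}{\partial v^d}(p)$, and the discrepancy of each such polynomial orbit is controlled through the sets $A^d(N,\eps)$ of Definition \ref{defDisc}, which are $\lambda$-small by Lemma \ref{lemlem} (Erd\H{o}s--Tur\'an plus the uniformity in Weyl's one-dimensional theorem); the hypothesis $\mu\ll\lambda$ enters to force $\mu(A^d(N_0,\eps))<\delta$, and weak convergence $\pi_*\sigma_T^S\to m_S$ converts this into a density statement about the base points $p$. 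You instead argue globally by Fourier analysis: Weyl's criterion plus a single van der Corput differencing along $v$ reduces everything to the correlations $C_h(R)$, the same directional equidistribution of lattice points identifies $C_h(R)/N(R)\to c_h$ as Fourier coefficients of the normalized pushforward $(\psi\operatorname{mod}1)_*m_S$, and $\mu\ll\lambda$ enters via the Riemann--Lebesgue lemma, after which Ces\`aro averaging in $H$ finishes. The two proofs share the structural pillars (differencing along the fixed integer direction $v$, weak convergence of directional measures, negligibility of boundary layers), but your replacement of the discrepancy machinery by Riemann--Lebesgue makes the argument self-contained -- no appeal to Weyl's theorem or Erd\H{o}s--Tur\'an -- and it also sidesteps the boundary-regularity verification around equation (\ref{myEq}), since you test weak convergence against continuous functions $e(mh\psi)$ rather than indicators of the sets $A^d(N_0,\eps)$. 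What the paper's heavier machinery buys is the stronger Theorem \ref{WeylGen}: arbitrary derivative order $d$ and, crucially, measures supported on a single irrational point (this is what Theorem \ref{probOp} consumes), where Riemann--Lebesgue is unavailable; it is worth noting that your scheme is not intrinsically limited there either, since for $\mu=\delta_\alpha$ with $\alpha$ irrational the correlations $c_h=e(mh\alpha)$ still have vanishing Ces\`aro means (Fej\'er kernel decay), and iterated differencing would handle $d>1$, but as written your argument covers exactly the statement at hand.
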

\begin{remark}
The most general statement that can be proved along the lines of Theorems \ref{probOp},\ref{WeylPar} is given below as Theorem \ref{WeylGen}. For sake of keeping the introduction concise, we restricted this introduction to the two theorems above. We point out that Theorem \ref{WeylGen} is heavily used in \cite{stat} to prove the main theorem. 
\end{remark}
Already as weaker form of Theorem \ref{WeylGen}, Theorem \ref{WeylPar} can be used to prove the following corollary:
\begin{corollary}\label{normED}
	Let $n\in \NN$ and $1<p<\infty$. Then $\norm{\cdot}_{p}(\ZZ^n)\bmod1$ is equidistributed, where $\norm{\cdot}_p$ is the $\ell_p$-norm on $\RR^n$.
\end{corollary}

\section{Proof of the Main Theorem}
We need the following definition.
\begin{definition}[Directional derivatives]\label{def: part derivative}
Let $U\subset \RR^n$ be open, let $x\in U$ and let $f:U\rightarrow \RR$ be smooth. For any $v\in \RR^n$ and for small enough $\eps>0$ (depending on $v,x,U$), we define the function $f_v:(-\eps,\eps)\rightarrow \RR$ by $f_v(t)=f(x+tv)$. Given $d\in \NN$, the $d$'th derivative of $f$ in direction $v$ is defined as $\frac{\partial^d f}{\partial v^d}:=f_v^{(d)}(0)$.
\end{definition}
We define a family of measures on the unit circle $S^1$. This family is used to state the main result, namely Theorem \ref{WeylGen}. The importance of this family will become clear with the statement of Theorem \ref{WeylGen}.
\begin{definition}\label{irMeas}
	Let $n\in \NN$ and $M\in \operatorname{GL}_n(\RR)$. Let $\mu$ be a probability measure on $S^1$, $m_{S^{n-1}}$ be the Haar probability measure on $S^{n-1}$ and let $S\subset S^{n-1}$ be a spherical cap. Let $f:S\cdot \RR_+\rightarrow \RR$ be a smooth function. We denote $m_S$ to be the restriction of $m_{S^{n-1}}$ to $S$. 
	We say that $f$ is $M-S$-related to $\mu$ if there exists $d\in \NN$ and $v\in M\ZZ^n$ such that:
	\begin{equation}\label{homCond}
		\frac{\partial^{d} f}{\partial v^{d}}\text{ is degree-$0$-homogeneous (i.e. invariant under multiplication by positive scalars)},
	\end{equation}
	\begin{equation}\label{pushCond}
				\left(\frac{\partial^{d} f}{\partial v^{d}}\bmod 1\right)_*m_{S}=\mu
	\end{equation}
	and
	\begin{equation}\label{pushCondHom}
				\frac{\partial^{d+1} f}{\partial v^{d+1}}(x)\rightarrow 0\text{ as $\norm{x}_2\rightarrow \infty$}.
	\end{equation}
	When $S=S^{n-1},M=I$ and the above holds we say simply that $f$ is related to $\mu$.
\end{definition}
\begin{remark}
	The discussion in this section can be carried in a slightly more general context. Namely, we can replace $S^1$ with the $k$-dimensional torus for $k\in \NN$. The arguments and definitions will be identical to the case $k=1$ up to the appropriate modifications in the notation. For simplicity of this note, we chose not to treat this case.
\end{remark}
In the following theorem we state equidistribution results for functions related (in the sense of Definition \ref{irMeas}) to two families of measures.
\begin{theorem}\label{WeylGen}
	Let $n\in \NN$, $M\in \operatorname{GL}_n(\RR)$, $S\subset S^{n-1}$ a spherical cap and recall that $\lambda$ is the Haar probability measure on $S^1$. Let $\mu$ denote a probability measure which is either absolutely continuous w.r.t. $\lambda$ or supported on one irrational point in $S^1$ (namely a point of the form $e^{2\pi i\alpha}$ for $\alpha\in \RR\setminus\QQ$). Then for any smooth $f:S\cdot \RR_+\rightarrow \RR$ which is $S-M$-related to $\mu$ and for any $u\in \RR^n$, $f(M\ZZ^n+u)\bmod 1$ is equidistributed.
\end{theorem}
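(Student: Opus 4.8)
The plan is to verify Weyl's equidistribution criterion for $f\operatorname{mod}1(\ZZ^n)$ and to run an induction on the order $d$ of the directional derivative in Definition~\ref{irMeas}, using van der Corput's differencing inequality in the fixed integer direction $v$. Writing $D=\RR_+\cdot S$, $N_R=\#(\ZZ^n\cap B_R(0)\cap D)$ and
\[ S_R(k)=\frac{1}{N_R}\sum_{m\in\ZZ^n\cap B_R(0)\cap D}e^{2\pi i k f(m)}, \]
equidistribution is equivalent to $S_R(k)\to 0$ for every integer $k\neq 0$. Differencing once along $v$ with scalar shift $j$ (replacing $m$ by $m+jv$) and letting $R\to\infty$ with the window $H$ fixed, van der Corput's inequality yields
\[ \limsup_{R\to\infty}\abs{S_R(k)}^2\le\frac{1}{H+1}\sum_{\abs{j}\le H}\Bigl(1-\tfrac{\abs{j}}{H+1}\Bigr)A_{j},\qquad A_{j}:=\lim_{R\to\infty}\frac{1}{N_R}\sum_{m}e^{2\pi i k\,\Delta_{jv}f(m)}, \]
where $\Delta_{jv}f(x)=f(x+jv)-f(x)$; the boundary terms created by the shift touch only $O(R^{n-1})$ of the $N_R\sim R^n$ points and are harmless.

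For the base case $d=1$ I would evaluate the correlations $A_j$ explicitly. Since $\partial f/\partial v$ is degree-$0$-homogeneous and $\partial^2 f/\partial v^2\to 0$ by \eqref{pushCondHom}, a mean-value estimate gives $\Delta_{jv}f(m)=j\,c(m/\norm{m}_2)+o(1)$ as $\norm{m}_2\to\infty$, where $c:=\partial f/\partial v$. Because the \emph{directions} $m/\norm{m}_2$ of lattice points in $B_R(0)\cap D$ equidistribute towards the normalized measure $\overline{m}_S:=m_S/m_S(S)$, and $(c\operatorname{mod}1)_*\overline{m}_S=\mu$ by \eqref{pushCond}, this gives $A_j=\int_{S^1}e^{2\pi i kj\theta}\,d\mu(\theta)=\widehat{\mu}(kj)$. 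Interchanging sum and integral turns the bound into a Fejér average: with $F_{H+1}(x)=\sum_{\abs{j}\le H}(1-\tfrac{\abs{j}}{H+1})e^{2\pi i j x}\ge 0$,
\[ \limsup_{R\to\infty}\abs{S_R(k)}^2\le\frac{1}{H+1}\int_{S^1}F_{H+1}(k\theta)\,d\mu(\theta). \]
Now $0\le\tfrac{1}{H+1}F_{H+1}(k\theta)\le 1$ and $\tfrac{1}{H+1}F_{H+1}(k\theta)\to\mathbf 1[k\theta\in\ZZ]$ pointwise, so dominated convergence leaves the bound $\mu(\{\theta:k\theta\in\ZZ\})$. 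This is exactly where irrationality enters: if $\mu\ll\lambda$ the $\mu$-measure of a finite set is $0$, while if $\mu=\delta_{e^{2\pi i\alpha}}$ with $\alpha\notin\QQ$ then $k\alpha\notin\ZZ$; either way the limit is $0$, which unifies the two families in a single line.

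For the inductive step I would feed $\Delta_{jv}f$ (for fixed $j\neq0$) into the induction hypothesis to conclude $A_j=0$, leaving only the $j=0$ term, which equals $1$ and contributes $\tfrac{1}{H+1}\to 0$. The one real subtlety is that differencing does \emph{not} preserve the exact homogeneity of \eqref{homCond}: $\partial^{d-1}(\Delta_{jv}f)/\partial v^{d-1}$ is only \emph{asymptotically} degree-$0$-homogeneous. I would therefore prove, by induction on $d$, a slightly more flexible statement in which \eqref{homCond}--\eqref{pushCond} are replaced by the requirement that $\partial^{d}g/\partial v^{d}(x)$ \emph{converges}, as $\norm{x}_2\to\infty$, to a degree-$0$-homogeneous $c$ with $(c\operatorname{mod}1)_*\overline{m}_S$ absolutely continuous or a single irrational Dirac. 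This asymptotic condition \emph{is} stable under $\Delta_{jv}$: integrating $\partial^{d}g/\partial v^{d}\to c$ along the segment from $x$ to $x+jv$ shows $\partial^{d-1}(\Delta_{jv}g)/\partial v^{d-1}\to j\,c$, again degree-$0$-homogeneous, and pushes the limit measure forward by $\theta\mapsto j\theta$ — which keeps absolutely continuous measures absolutely continuous and sends $\delta_{\alpha}$ to $\delta_{j\alpha}$ (still irrational for $j\neq0$). So the admissible class of limit measures is closed under the operation, and the original Theorem is the case where the limit is attained exactly. (In the Dirac case one can alternatively note that $\partial^d f/\partial v^d$ is then forced to be a single irrational constant $\gamma$, whence $f$ restricts to a degree-$d$ polynomial with irrational leading coefficient on each line $m+\ZZ v$ and Weyl's one-variable theorem applies line by line; the van der Corput route above handles both cases at once.)

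The main obstacle, and the step I would spend the most care on, is the base-case identity $A_j=\widehat\mu(kj)$: it rests on (a) equidistribution of the directions of lattice points inside the cone $D$ towards $\overline{m}_S$, together with the interchange of the $R$-limit with the finite difference, justified by \eqref{pushCondHom}; and (b) honest control of the order of limits, first $R\to\infty$ and only then $H\to\infty$. Secondary technical points are the behaviour near $\partial S$, where a continuous degree-$0$-homogeneous function need not stay bounded — I would take $S$ closed or exhaust it by compact subcaps and absorb the boundary shell into the $o(N_R)$ error — and the uniformity in the direction needed to pass from $\partial^{d}g/\partial v^{d}\to c$ to a uniform estimate on $\Delta_{jv}g$. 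None of these is deep, but they are precisely what makes the clean chain $\text{(van der Corput)}\Rightarrow\text{(Fej\'er)}\Rightarrow\text{(dominated convergence)}$ rigorous.
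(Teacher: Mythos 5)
Your proposal is correct, but it follows a genuinely different route from the paper's proof. The paper never works with exponential sums: it fixes $v=e_i$, tiles the cone $[0,T]S$ by integer segments $I_p$ in the direction $v$, approximates $f$ on each segment by the polynomial $P_p$ via Taylor's theorem (this is where \eqref{pushCondHom} enters), and then imports Weyl's one-variable theorem as a quantitative black box: using the Erd\H{o}s--Tur\'an inequality it introduces the bad-coefficient sets $A^d(N,\eps)$ of Definition \ref{defDisc}, shows in Lemma \ref{lemlem} that they shrink into $\QQ\cap[0,1]$ (using that Weyl's exponential-sum bound is uniform in the lower-order polynomial $P$), and uses weak convergence of lattice directions to conclude that most segments carry a leading coefficient with small discrepancy. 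This forces a lengthy side argument (continuity of the discrepancy functionals $G_k$, countability of the exceptional level set $C$, the choice $\eps_k\notin C$) whose sole purpose is to justify applying weak convergence to the sets $\left(\frac{\partial^d f}{\partial x_i^d}\operatorname{mod}1\right)^{-1}A(N_0,\eps)$. Your van der Corput induction avoids all of that: Weyl's criterion tests weak convergence only against the continuous functions $e^{2\pi i kj c(\cdot)}$, so no boundary-regularity analysis of sets is needed; the two measure classes are unified in the single identity $\mu(\{\theta:k\theta\in\ZZ\})=0$ instead of being split into \eqref{aa} and \eqref{b}; and the argument is self-contained, reproving the one-variable input rather than citing \cite{Weyl} together with its uniformity-in-$P$ remark. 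The price is the point you correctly isolate as the crux: differencing destroys the exact homogeneity \eqref{homCond}, so the induction must be run on the asymptotically homogeneous statement, and the admissible class of limit measures must be closed under $\theta\mapsto j\theta$ — both of your checks are right ($(m_j)_*\mu\ll\lambda$ when $\mu\ll\lambda$, and $\delta_\alpha\mapsto\delta_{j\alpha}$ with $j\alpha$ irrational for $j\neq 0$). Your remaining caveats (behaviour near $\partial S$, uniformity in direction, the $O(HR^{n-1})$ boundary terms, the normalization $\overline{m}_S$ that the paper leaves implicit) are genuine but routine, and sit at the same level of rigor as the paper's own appeals to ``well known'' facts such as $\pi_*\sigma^S_T\to m_S$ and the absolute constant in \eqref{three}. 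One caution: the parenthetical line-by-line alternative for the Dirac case is not actually a shortcut, because applying Weyl's theorem separately to each line $m+\ZZ v$ requires exactly the uniformity over the varying lower-order coefficients that the paper's Lemma \ref{lemlem} extracts from \cite{Weyl}; rely on your van der Corput argument for that case as well.
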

We united the two claims of the above theorem because their proofs are conceptually the same. As first step towards proving Theorem \ref{WeylGen}, we need to define discrepancy and introduce some notation:
\begin{definition}\label{defDisc}
	Given $N,d\in \NN$, $\eps>0$ and a sequence $\{x_n\}_{n\in \NN}\subset \RR$, we define:
	\begin{enumerate}
		\item The discrepancy of $\{x_n\}_n$ after $N$ steps by:
	\begin{equation*}
		D_N(\{x_n\}_n)=\sup_{I=[a,b]\subset[0,1]}\abs{\mu_N(I)-(b-a)}
	\end{equation*}
	where $\mu_N$ is the uniform measure on the points $\{x_n\bmod1\}_{n=1}^N\subset [0,1]$;
	\item The set:
	\begin{equation*}
		A^d(N,\eps)=\{a\in [0,1]:D_l\left(\{[an^d+P(n)]\bmod1\}_{n\in \NN}\right)>\eps,
	\end{equation*}
	\begin{equation*}
		\text{ for some polynomial }P\text{ of degree at most $d-1$ and every }l=1,\dots,N\}.
		\end{equation*}
	\end{enumerate}
\end{definition}
As a second step we need the following lemma:
\begin{lemma}\label{lemlem}
	For any $d\in \NN$ and $\eps>0$ the following assertions hold:
	\begin{enumerate}
		\item For every $N\in \NN$, $A^d(N+1,\eps)\subset A^d(N,\eps)$;
		\item $\bigcap_{N\geq 1}A^d(N,\eps)\subset \QQ\cap [0,1]$, therefore $\lambda(A^d(N,\eps))\rightarrow 0$ as $N\rightarrow \infty$.
	\end{enumerate}
\end{lemma}
\begin{proof}
	The first assertion is immediate upon noticing that enlargement of $N$ adds conditions to the definition of $A^d(N,\eps)$. The second follows from the celebrated Erd\H{o}s-Turan inequality since the bound on the exponential sum of $(an^d+P(n))_{n\in \NN}$ for irrational $a$ is uniform in $P$ (where $\deg(P)<n$) by the remark following Equation (9) in \cite{Weyl}.
\end{proof}
Now we are ready to prove Theorem \ref{WeylGen}.
\begin{proof}[Proof of Theorem \ref{WeylGen}]
Before we begin we remark that the notation $a$ will usually denote an element of $S^1$, which is sometimes be thought of as $[0,1)$ under the identification $[0,1)\ni t\mapsto e^{2\pi it}\in S^1$. Let $\mu$ be a measure on $S^1$ satisfying the conditions of the theorem, $S\subset S^{n-1}$ be a spherical cap and let $f:S\cdot\RR_+\rightarrow \RR$ be a smooth function which is $S-M$-related to $\mu$ with some $d\in \NN$.
 Let $\eps,\delta>0$, $N_0\in \NN$ to be determined. Define for any $T>0$:
	\begin{equation*}
		B_{T,\eps}:=\{p\in (M\ZZ^n+u)\cap [0,T]S:\text{ }\frac{\partial^d f}{\partial v^d}(p)\bmod1\notin A^d(N_0,\eps)\}
	\end{equation*}
	\begin{equation*}
		=\{p\in (M\ZZ^n+u)\cap [0,T]S:\text{ }\frac{\partial^d f}{\partial v^d}(p/\norm{p}_2)\bmod1\notin A^d(N_0,\eps)\}
	\end{equation*}
	where the equality holds by (\ref{homCond}) (recall Definition \ref{defDisc}). Recall Definition \ref{ED} and denote for all $T>0$:
	\begin{equation*}
			\sigma_T^S=\sigma_T^{S\cdot \RR_+}
	\end{equation*}
	 to be the counting measure of $M\ZZ^n+u$ in $S\cdot \RR_+$.
	 We claim that in both cases: where $\mu$ is supported on an irrational point and where $\mu\ll\lambda$, we can make sure that for some $\eps_k\rightarrow 0$, large enough $N_0=N_0(\eps_k,\delta)$ and $T>T_0(\eps_k,\delta)$:
	 \begin{equation}\label{both}
	 	\sigma_T^S(B_{T,\eps_k}^c)\leq C_2\delta
	 \end{equation}
for some fixed $C_2>0$ depending only on $M$ (recall that $B_{T,\eps}$ depends on $N_0$). Indeed, suppose that $\mu$ is supported on an irrational point, call it $\alpha\in S^1$. In this case, the irrationality of $\alpha$ and Lemma \ref{lemlem}(2) imply that for all $\eps>0$:
\begin{equation}\label{aa}
	 \text{for all large enough }N_0, \alpha\notin A^d(N_0,\eps).
\end{equation}
	Otherwise, suppose that $\mu\ll\lambda$. Then by Lemma \ref{lemlem}(2) and since $\mu\ll\lambda$, for all $\eps>0$:
\begin{equation}\label{b}
	 \text{for all large enough }N_0, \mu(A^d(N_0,\eps))<\delta.
\end{equation}	
Now let $\mu$ satisfy either one of the conditions of Theorem \ref{WeylGen}. Denote $\pi:S\cdot \RR_+\setminus\{0\}\rightarrow S$ to be the projection onto S, namely $x\mapsto x/\norm{x}_2$. It is well known (see \cite{pom}\footnote{This reference deals with the case $M=I$ and $u=0$ and yields $\tilde m_S=m_S$. However, the general case of $M\in \operatorname{GL}_n(\RR)$ and $u\in \RR^n$ follows from this case. See Remark \ref{rem: davenport for grids} for more details.}) that there exists $0<C_1<C_2$ depending on $M$ and a probability measure $\tilde m_{S}$ on $S$ such that
\begin{enumerate}
	\item $\pi_*\sigma^S_T\rightarrow \tilde m_S$ as $T\rightarrow \infty$;
	\item $C_1m_S\leq \tilde m_S\leq C_2m_{S}$.
\end{enumerate}
Therefore for all $\eps>0$:
	\begin{equation}\label{one}
		\sigma_T^S(B_{T,\eps}^c)=\pi_*\sigma_T^S(\{p\in S:\frac{\partial^d f}{\partial v^d}(p)\bmod1\in A^d(N_0,\eps)\})\xrightarrow{T\rightarrow \infty} \tilde m_S(\pi(B_{\infty,\epsilon}^c))
	\end{equation}
	\begin{equation*}
		\leq C_2m_S(\pi(B_{\infty,\epsilon}^c))=\left(\frac{\partial^d f}{\partial v^d}\bmod1\right)_*m_S(A^d(N_0,\eps))\leq \begin{cases}
    0,& \text{when }\mu=\delta_{\alpha}\text{ by equations }(\ref{aa}),(\ref{pushCond}); \\
    C_2\delta,& \text{when }\mu\ll\lambda\text{ by equations }(\ref{b}),(\ref{pushCond}),
\end{cases}
	\end{equation*}
	by the definition of $B_{T,\eps}$, the remark on the weak convergence of $\pi_*\sigma_T^S$. Thus for all $T$ large enough equation (\ref{both}) holds. To use the weak convergence in (\ref{one}), we still need to verify that:
	\begin{equation}
		m_S\left(\partial \left[\left(\frac{\partial^d f}{\partial v^d}\bmod 1\right)^{-1}A^d(N_0,\eps)\right]\right)=0
	\end{equation}
	for an appropriate family of $\eps$'s that converge to $0$. Since $\tilde m_S\leq C_2m_S$, this will show the same for $\tilde m_S$ which is sufficient to apply the weak convergence in (\ref{one}).
	
	For any natural $k$ and polynomial $P$ of degree at most $d-1$, define a function $G_k(P,\cdot):[0,1]\rightarrow \RR_+$ by: 
	\begin{equation*}
		[0,1]\ni a\mapsto D_k\left(\left\{([an^d+P(n)]\bmod1\right\}_{n\in \NN}\right)\in \RR_+.
	\end{equation*}
Denote $\mu_{a}$ to be the uniform measure on $\{[a1^d+P(1)]1,\dots,[ak^d+P(k)]\bmod1\}\subset S^1$ for any $a\in [0,1)$.
Fix $a\in[0,1)$ and let $[0,1)\ni a_n\rightarrow a$. Let $I\subset [0,1)$ be an interval and let $m$ be large enough such that $\mu_{a_n}(I)\leq\mu_{a}(I+[-1/m,1/m])$ for every $n\geq m$. Such $m$ can be found since we only consider finitely many points. Therefore:
	\begin{equation*}
		\sup_{I\subset S^1}\abs{\lambda(I)-\mu_{a}(I)}\geq \sup_{I\subset S^1}\abs{\lambda(I)-\mu_{a_n}(I)}-\frac{2}{m}.
	\end{equation*}
Since this inequality is symmetric (we can replace $\theta,\theta_n$), we get continuity of $G_k(P,\cdot)$. By similar arguments it is readily seen that $G_k$ is also continuous in the coefficients of $P$. Note that by definition of $D_k$ (see Definition \ref{defDisc}) the function $G_k(P,\cdot)$ only depends on the values of the coefficients of $P$ modulo $1$ so in fact:
\begin{equation*}
	G_k(a):=\sup_{\overline b=(b_1,\dots,b_d)\in [0,1]^d}G_k(P_{\overline b},a)=\sup_{P \text{ Polynomial of degree $\leq d-1$}}G_k(P,a)
\end{equation*}
where for $\overline b=(b_1,\dots, b_d)$ we denote $P_{\overline b}(t)=b_1t^{d-1}+\dots+b_d$. Therefore $G_k(\cdot)$ is continuous as the supremum on the first parameter of $\tilde{G_k}:[0,1]^d\times S^1\rightarrow \RR_+$, where $\tilde{G_k}(\overline b,a):=G_k(P_{\overline b},a)$ (it holds since the domain of $\tilde{G_k}$ is compact).
Note that by definition of $A^d(N,\eps)$ and of $G_k$, $A^d(N_0,\eps)=\{a\in [0,1):G_k(a)>\eps\text{ for every }k=1,\dots,N_0\}$ therefore by continuity of $G_k(\cdot)$:
\begin{equation}\label{myEq}
	\partial A^d(N_0,\eps)=\{a\in [0,1):G_k(a)=\eps\text{ for some }k=1,\dots,N_0\}.
\end{equation}
Note that for every $k$, the set $C_k=\{t\in \RR_+:\mu(\{G_k=t\})>0\}$ must be countable (otherwise we would find uncountably many disjoint subsets of $S^1$ with positive $\mu$ measure which contradicts the assumption on $\mu$), so $C=\bigcup_{k\geq 0}C_k$ is countable as well. In particular, we can take $\epsilon_k\rightarrow 0$ such that $\epsilon_k\notin C$ for every $k$. Note that by (\ref{myEq}), for every $N$, $\partial A(N,\epsilon)\subset \bigcup_{M\leq N}\{G_M=\eps\}$ and if $\eps\notin C$ we get in particular that $\mu(\partial A(N,\epsilon))=0$. Recall the following general fact: for any continuous function $g:X\rightarrow Y$ between metric spaces, and a subset $A\subset Y$ it holds that $\partial(g^{-1}A)\subset g^{-1}(\partial A)$. Therefore by (\ref{pushCond}):
\begin{equation*}
	m_S\left(\partial\left[\left(\frac{\partial^d f}{\partial v^d}\bmod 1\right)^{-1}A(N,\eps_k)\right]\right)\leq m_S\left(\left(\frac{\partial^d f}{\partial v^d}\bmod 1\right)^{-1}\partial A(N,\eps_k)\right)=\mu(\partial A(N,\eps_k))=0
\end{equation*}
so we justified the use of weak convergence in equation (\ref{one}) with the sequence $(\eps_k)_k$.\\
For every $\eps>0$ and $p\in B_{\infty,\eps}$, denote:
\begin{enumerate}
	\item $I_p=\{p+kv:k=1,\dots,N_p\}$;
	\item $f_p(t)=f(p+tv);t\in [0,N_p]$;
	\item $P_p(t)=\frac{\partial^d f}{\partial v^d}(p)t^d+\dots+f(p)$,
\end{enumerate}
where $N_p$ is an integer in $[1,N_0]$ for which $G_{N_p}(\frac{\partial^d f}{\partial v^d}(p))<\epsilon$ (which exists since $p\in B_{\infty,\eps}$). By (\ref{pushCondHom}), for $T>T_0$ large enough $\abs{\frac{\partial^{d+1}f}{\partial v^{d+1}}(p)}<\epsilon/N_0^d$ when $p\in [T,\infty)S$. By Taylor's Remainder theorem:
\begin{equation}\label{three}
\norm{f_p-P_p}_{\infty}\leq C\epsilon	
\end{equation}
for any $p\in [T,\infty)S$, after possibly modifying $T_0$ and for some absolute constant $C$ (not depending on $N_0$). By (\ref{three}) and Definition \ref{defDisc}, for any $p\in B_{T,\eps}$ ($T>T_0$):
\begin{equation}\label{eq}
	D_{N(p)}(\{f_p(k)\}_{k\in \NN})\leq C\epsilon
\end{equation}
for some absolute constant $C$. By equation (\ref{both}) we may find for any $k\in \NN,\delta>0$ positive numbers $T>0,N_0\in \NN$ large enough, and points $\{p_i^T\}_{i=1}^{N_T}\subset B_{T,\epsilon_k}$ for some $N_T\in \NN$, such that:
\begin{equation}\label{blahh}
 \frac{\abs{\bigsqcup_{i=1}^{N_T}I_{p_i^T}}}{\abs{[0,T]S\cap (M\ZZ^n+u)}}\geq (1-C_2\delta)+o_T(1);
\end{equation}
\begin{equation}\label{blah}
	 I_{p_i^T}\subset [0,T]S\cap (M\ZZ^n+u). 
\end{equation}
Note that the above two geometric properties follow from Equation (\ref{both}) and the fact that $v\in M\ZZ^n$. Now by equations (\ref{eq}),(\ref{blahh}),(\ref{blah}) we deduce that for any $k\in \NN, I\subset S^1$ and $T$ large enough:
\begin{equation*}
	\abs{\sigma_T^S\left((f\bmod1)^{-1}I\right)-\lambda(I)}\leq C_2\delta+(1-C_2\delta+o_T(1))C\eps_k
\end{equation*}
which implies the claim of the theorem, by taking $T\rightarrow \infty$ and then taking $k\rightarrow \infty$, $\delta\rightarrow 0$ and using the fact that $\eps_k\rightarrow 0$.
\end{proof}
\begin{remark}
	Although in Definition \ref{irMeas} $S$ is a subset of $S^{n-1}$, a larger family of sets can replace it in Theorem \ref{WeylGen}. For example, any open bounded subset $S$ of a level surface of a homogenous function, if $S$ is Jordan measurable in the level surface. Since the proof is identical we chose to treat the case of the sphere which is easier to visualise.
\end{remark}
\begin{remark}\label{rem: davenport for grids}
	In the proof, we required the following fact. Let $M\in\operatorname{GL}_n(\RR)$ and let $u\in \RR^n$. For simplicity, we deal with the case $S=S^{n-1}$. The case of general $S$ is similar. Denote for any $R>0$, by $\pi_*\sigma^{M,u}_R$ the measure on $S^{n-1}$ coming from projecting the points $M\ZZ^n+u\cap B_R(0)$ to $S^{n-1}$ with the map $\pi$. Then $\pi_*\sigma_R\rightarrow \tilde m_{S^{n-1}}$ weakly as $R\rightarrow \infty$ where $C_1m_{S^{n-1}}\leq \tilde m_{S^{n-1}}\leq C_2m_{S^{n-1}}$. The reference \cite{pom} shows this claim for $M=I$ and $u=0$. We sketch a proof of how to deduce the general case from this. Let $A\subset S^{n-1}$ be open and Jordan measurable. We know from the case $M=I,u=0$ that $\pi_*\sigma^{I,0}_R(\RR_+\cdot M^{-1}A\cap S^{n-1})\rightarrow m_{S^{n-1}}(\RR_+\cdot M^{-1}A\cap S^{n-1})$ as $R\rightarrow \infty$. Moreover, one readily checks that there exist $0<C_1<C_2$ depending on the operator norm of $M$ such that $C_1 m_{S^{n-1}}(A)\leq m_{S^{n-1}}(\RR_+\cdot M^{-1}A\cap S^{n-1})\leq C_2m_{S^{n-1}}(A)$. Define $\tilde m_{S^{n-1}}(A)=m_{S^{n-1}}(\RR_+\cdot M^{-1}A\cap S^{n-1})$. Next, note that for any $u\in \RR^n$ and $M\in \operatorname{GL}_n(\RR)$ there exists $C=C(M)>0$ such that $\pi_*\sigma_R^{M,u}(A)=\pi_*\sigma_{C(M)R}^{I,0}(\RR_+\cdot M^{-1}A\cap S^{n-1})+o_R(1)$. Taking $R\rightarrow\infty$ shows $\pi_*\sigma_R^{M,u}(A)\rightarrow \tilde m_{S^{n-1}}(A)$ and we already know that $\tilde m_{S^{n-1}}$ satisfies $C_1m_{S^{n-1}}\leq \tilde m_{S^{n-1}}\leq C_2m_{S^{n-1}}$.
\end{remark}

\section{Proof of Theorem \ref{probOp} and Corollary \ref{normED}}\label{OpSec}
We start by proving Theorem \ref{probOp}. Our method is finding, for any polynomial $F$ as in Theorem \ref{probOp}, $v\in \ZZ^{n}$ such that $F$ and $v$ satisfy equations (\ref{homCond})-(\ref{pushCondHom}) for $d=\operatorname{deg}(F)$ and a measure $\mu$ on $S^1$ which is supported on an irrational point. In the language of Definition \ref{irMeas}, we will find a Dirac measure $\mu$ supported on an irrational point such that $F$ is related to $\mu$. This will prove Theorem \ref{probOp} using Theorem \ref{WeylGen}. First, we prove the existence of such $v$ under an extra assumption:
\begin{lemma}\label{a}
	Let $m,n,d\in \NN$ and let $F:\RR^n\rightarrow \RR$ be $F(x)=\sum_{\ell}\alpha_{\ell}x^{\ell}$ where $\ell=(\ell_1,\dots,\ell_n)$ is a multi-index and $d=\max\{\abs{\ell}:\alpha_{\ell}\neq 0\}$ is the degree of $F$. Assume that there exist $\ell$ such that $\abs{\ell}=d$ and $\alpha_{\ell}\notin\QQ$. Then there exists $v\in \ZZ^n$ and an irrational point $\alpha_0\in (0,1)$ such that:
	\begin{equation*}
			\frac{\partial^{d} F}{\partial v^{d}}\equiv \alpha_0.
	\end{equation*}
In other words, $F$ is related to $\delta_{\alpha_0}$.
\end{lemma}
\begin{proof}
Assume for the sake of contradiction that:
\begin{equation}\label{ratEq}
	\forall v\in \ZZ^n:\frac{\partial^d F}{\partial v^d}\in \QQ.
\end{equation}
Let $A_{n,d}=\{\ell:\abs{\ell}=d\}$. For every $v=(m_1,\dots,m_n)\in \ZZ^n$ let $u_v:A_{n,d}\rightarrow \ZZ$ be defined by:
\begin{equation} \label{defdef}
	u_v(\ell_1\dots,\ell_n)=\prod_{i=1}^nm_i^{\ell_i}	
\end{equation}
of course $\alpha,u_v$ can be thought of as an element of $\RR^{k=\abs{A_{n,d}}}$. By direct calculation, Equation (\ref{ratEq}) implies that for every $v\in \ZZ^n$:
\begin{equation}
	\sum_{\ell\in A_{n,d}}q_{\ell}\alpha_{\ell}u_v(\ell)\in \QQ
\end{equation}
for some rational numbers $q_{\ell}$. By (\ref{defdef}) there exist $v_1,\dots,v_k\in \ZZ^n$ such that $u_{v_1},\dots,u_{v_k}$ is a basis for $\RR^k$. For example, we can take $v_i=(2^{i-1},\dots,p_m^{i-1})$ for $i=1,\dots,k$ where $p_{j}$ is the $j$'th prime. The vectors $u_{v_1},\dots,u_{v_k}$ will form a basis for $\RR^k$ since they are the columns of a Vandermonde matrix (this follows from (\ref{defdef})) and by the uniqueness of the prime decomposition, the entries of the vector $u_{v_2}$ are all different. This shows that the determinant will be non-zero. 
Therefore by the above equation every $\QQ$-linear combination of elements from the set $\{\alpha_{\ell}:\abs{\ell}=d\}$ is rational which contradicts the assumption. 
\end{proof}
Second, we prove Theorem \ref{probOp}:
\begin{proof}[Proof of Theorem \ref{probOp}]
Assume that the theorem holds for all polynomials of degree strictly less than $d$. If $\alpha_{\ell}$ is rational for every $\abs{\ell}=d$, there exists $q\in \ZZ$ such that $q\alpha_{\ell}\in \ZZ$ for any $\abs{\ell}=d$. Then the quantities $F(qv+r)\bmod 1$ for $v\in \ZZ^n$ and fixed $r\in \{0,\dots,q-1\}^n$ coincide with values of polynomials of degree strictly smaller than $d$ for which the conditions of the theorem hold. By induction we get that the values of $F$ are equidistributed modulo 1. Therefore we may assume that $\alpha_{\ell}\notin \QQ$ for some $\abs{\ell}=d$. 	Lemma \ref{a} shows that under the assumptions of Theorem \ref{WeylGen}, $F$ is related to $\mu:=\delta_{\alpha_0}$ for some irrational $\alpha_0\in (0,1)$. 
By Theorem \ref{WeylGen}, $F(\ZZ^n)\bmod1$ is equidistributed, as desired. 
\end{proof}
We conclude this note with a proof of Corollary \ref{normED}. Similarly to the proof of Theorem \ref{probOp}, we do it by finding a direction $v\in \ZZ^n$ and a measure $\mu$ such that the norm function together with $v$ and $\mu$ satisfy (\ref{homCond})-(\ref{pushCondHom}). Then we prove that $\mu$ satisfies the conditions of Theorem \ref{WeylGen} and deduce uniform distribution.
\begin{proof}[Proof of Corollary \ref{normED}]
	Fix $p\in (1,\infty)$ and let $F:\RR^n\rightarrow \RR_+$ be defined by $x\mapsto \norm{x}_{p}=\left(\sum_{i=1}^n\abs{x_i}^p\right)^{1/p}$. Since for any $i=1,\dots,n$ $F$ is invariant under the map $e_i\mapsto -e_i,e_j\mapsto e_j;\forall j\neq i$ it is sufficient to prove that $F\mid_{\RR_+^n}\bmod1(\ZZ^n)$ is equidistributed. 
	 Denote $v=e_j$ for some $j=1,\dots,n$ (arbitrarily chosen). Note that $F$ is homogeneous of degree 1 therefore $\frac{\partial}{\partial x_j}F$,$\frac{\partial^2}{\partial x_j^2}F$ are homogeneous of degree $0,-1$ respectively and so, $F$ satisfies Equations (\ref{homCond}),(\ref{pushCondHom}) for $d=1$. It remains to check that (\ref{pushCond}) holds. Indeed, for any $(x_1\dots,x_n)\in \RR^n_+$:
	\begin{equation}
		\frac{\partial}{\partial x_j}F(x_1,\dots,x_n)=x_j^{p-1}\left(\sum_{i=1}^nx_i^p\right)^{\frac{1}{p}-1}
	\end{equation}
	and denoting $S_p=\{x\in \RR_+^n:\norm{x}_p=1\}$, the function $G(x_1,\dots,x_n):=\frac{\partial}{\partial x_j}F\mid_{S_p} (x_1,\dots,x_n)=x_j^{p-1}$ satisfies $(G\bmod1)_*\mu_{S_p}\ll\lambda$ where $\mu_{S_p}$ is the surface measure on $S_p$, since the degree of the Jacobian of $F$ is 1 at every point. By Theorem \ref{WeylGen}, $F\mid_{\RR_+^n}(\ZZ^n)\bmod1$ is indeed equidistributed. 
\end{proof}

\end{document}